\newtheorem{theorem}{Theorem}
\newtheorem{lemma}{Lemma}
\DeclareMathOperator{\diag}{diag}
\DeclareMathOperator{\tridiag}{tridiag}
\newcommand{\mi}{\mathrm{i}}
\newcommand{\trans}{^{\mathsf T}}
\newcommand{\herm}{^{\mathsf H}}
\newcommand{\itrans}{^{-\mathsf T}}
\newcommand{\set}[1]{\left\lbrace#1\right\rbrace}
\newcommand{\conj}[1]{\overline{#1}}
\newcommand{\GT}{\succ}
\renewcommand{\Re}{\mathrm{Re}}
\renewcommand{\Im}{\mathrm{Im}}
\renewcommand{\tilde}{\widetilde}
\newcommand{\blas}{BLAS}
\newcommand{\lapack}{LAPACK}
\newcommand{\pblas}{PBLAS}
\newcommand{\scalapack}{ScaLAPACK}
\newcommand{\hide}[1]{}
\newcommand{\CY}[1]{{\color{blue}~\textsf{[CY: #1]}}}
\title{Structure Preserving Parallel Algorithms for Solving the
Bethe--Salpeter Eigenvalue Problem}
\author[1]{Meiyue Shao}
\author[2,3]{Felipe H. da Jornada}
\author[1]{Chao Yang}
\author[4]{Jack Deslippe}
\author[2,3]{Steven~G.~Louie}
\affil[1]{Computational Research Division,
Lawrence Berkeley National Laboratory, Berkeley, CA 94720}
\affil[2]{Department of Physics,
University of California, Berkeley, CA 94720}
\affil[3]{Materials Sciences Division,
Lawrence Berkeley National Laboratory, Berkeley, CA 94720}
\affil[4]{NERSC, Lawrence Berkeley National Laboratory, Berkeley, CA 94720}
\begin{document}

\maketitle

\begin{abstract}
The Bethe--Salpeter eigenvalue problem is a dense structured eigenvalue
problem arising from discretized Bethe--Salpeter equation in the context of
computing exciton energies and states.
A computational challenge is that at least half of the eigenvalues and the
associated eigenvectors are desired in practice.
We establish the equivalence between Bethe--Salpeter eigenvalue problems and
real Hamiltonian eigenvalue problems.
Based on theoretical analysis, structure preserving algorithms for a class of
Bethe--Salpeter eigenvalue problems are proposed.
We also show that for this class of problems all eigenvalues obtained from the
Tamm--Dancoff approximation are overestimated.
In order to solve large scale problems of practical interest, we discuss
parallel implementations of our algorithms targeting distributed memory
systems.
Several numerical examples are presented to demonstrate the efficiency and
accuracy of our algorithms.

\medskip

\textbf{Keywords:}
Bethe--Salpeter equation,
Tamm--Dankoff approximation,
Hamiltonian eigenvalue problems,
structure preserving algorithms,
parallel algorithms
\end{abstract}

\section{Introduction}
\label{sec:introduction}
The absorption of a photon by a molecular system or solid can promote an
electron in an occupied single-particle state (or orbital) to an unoccupied
state while keeping the charge neutrality.
In the physics community, this process is often described as the simultaneous
creation of a negatively charged quasielectron (or simply electron) and a
positively charged quasihole (or hole) in the material that was originally in
the lowest energy electronic configuration (the ground state).
Upon absorbing a photon, the entire molecular or extended system is in an
excited state that contains a correlated electron--hole pair, which is
referred to as an \emph{exciton}.
The amount of energy required to trigger this excitation gives an important
characterization of the material.
In many-body physics, a two-particle collective excitation is often described
by a two-particle Green's function, with the excitation energy level being a
pole of this function.
It has been shown that the two-particle Green's function satisfies an equation
often known as the \emph{Bethe--Salpeter equation} (BSE)~\cite{SB1951}.

The poles of the two-particle Green's function can be obtained by
computing the eigenvalues of a Hamiltonian operator $\mathcal{H}$ 
associated with this Green's function.  It can be shown that, with an
appropriate discretization scheme, the finite dimensional representation 
of the Bethe--Salpeter Hamiltonian has the following block structure
\begin{equation}
\label{eq:H-complex}
H=\begin{bmatrix} A & B \\ -\conj B & -\conj A \end{bmatrix},
\end{equation}
where $A,B\in \mathbb{C}^{n\times n}$, with
\begin{equation}
\label{eq:symmetry}
A=A\herm, \qquad B=B\trans.
\end{equation}
Here we use $A\herm$ to denote the conjugate transpose of $A$ and
$B\trans$ to denote the transpose of $B$.
We will refer to an eigenvalue problem of the form~\eqref{eq:H-complex} with
the additional symmetry given by Equation~\eqref{eq:symmetry} as a
\emph{Bethe--Salpeter eigenvalue problem}.

In principle, we are interested in all possible excitation energies, although
some excitations are more likely to occur than others.
Such likelihood can often be measured in term of what is known as the
\emph{spectral density} or \emph{density of states} of \(H\), which is defined
to be the number of eigenvalues per unit energy interval~\cite{LSY2014}, that
is,
\begin{equation}
\label{eq:DOS}
\phi(\omega)=\frac{1}{2n}\sum_{j=1}^{2n}\delta(\omega-\lambda_j).
\end{equation}
where $\lambda_j$'s denote the eigenvalues of \(H\).
This formulation requires all eigenvalues of \(H\) to be real, which is the
case for most physical systems.
In addition, the optical absorption spectrum
\begin{equation}
\label{eq:absp}
\epsilon^+(\omega)=\sum_{j=1}^n
\frac{(d_r\herm x_j)(y_j\herm d_l)}{y_j\herm x_j}\delta(\omega-\lambda_j),
\end{equation}
which can be measured in optical absorption experiments, is also of practical
interest.
Here \(d_r\) and \(d_l\) are \emph{dipole vectors}, \(x_j\) and \(y_j\) are
right and left eigenvectors, respectively, corresponding to the
\emph{positive} eigenvalue \(\lambda_j\).
To obtain highly accurate representations of~\eqref{eq:DOS}
and~\eqref{eq:absp}, the computation of all eigenpairs is required.
In addition to the optical absorption spectrum defined in~\eqref{eq:absp}, the
individual pairs of left and right eigenvectors are often desired, since they
describe the character of each two-particle excited state.

\hide{
Although it is possible to estimate the spectral density of $H$
without computing its eigenvalues, such estimate may have a limited
resolution. A more direct and straightforward way to obtain the 
spectral density is to compute all eigenvalues of $H$. \CY{not sure if we should
mention this.}
}

In general, both \(A\) and \(B\) are dense. The dimension of 
these matrices is proportional to the product of the number 
of occupied and unoccupied states, both of which are proportional
to the number of electrons $n_e$ in the system.  Hence it can 
become quite large for large systems that contain many atoms (and 
electrons).

We are interested in efficient and reliable parallel algorithms for computing 
all eigenvalues and the corresponding eigenvectors of \eqref{eq:H-complex}.
Because $H$ is a non-Hermitian matrix, we need to compute both 
the left and the right eigenvectors.

Although it is possible to treat $H$ as a general non-Hermitian matrix and use
existing parallel algorithms~\cite{Fahey2003,GKKS2014} implemented in
ScaLAPACK~\cite{ScaLAPACK} to solve such a problem, this approach does not
take advantage of the special structure of the Bethe--Salpeter Hamiltonian and
is thus not efficient.
Nor does this approach preserve some desirable properties of the eigenvalues
and eigenvectors.
Moreover, the current release of ScaLAPACK only has a subroutine that performs
a Schur decomposition of a complex non-Hermitian matrix~\cite{Fahey2003}.

In the following section, we show that $H$ belongs to a class of
matrices known as \emph{\(J\)-symmetric matrices} whose eigenvalues
satisfy a special symmetry property. Although several algorithms
have been developed for computing the eigenvalues and eigenvectors
of this class of matrices, efficient parallel implementations of
these algorithms are not available, and are not easy to develop.

In this paper, we develop a special algorithm that can leverage existing
parallel computational kernels available in the ScaLAPACK software package.
Our algorithm is based on the observation that computing the eigenpairs
of~\eqref{eq:H-complex} is equivalent to computing the eigenpairs of a real
Hamiltonian matrix of the form
\begin{equation}
\label{eq:H_r}
H_r=\begin{bmatrix} \Im(A+B) & -\Re(A-B) \\ \Re(A+B) & \Im(A-B) \end{bmatrix}
\end{equation}
where $\Re(A)$ and $\Im(A)$ denote the real and imaginary parts
of $A$, respectively.
Furthermore, when $A$ and $B$ satisfy the property
%
\begin{equation}
\label{eq:cond}
\begin{bmatrix} A & B \\ \conj B & \conj A \end{bmatrix}\GT0,%
\footnote{\(X\GT Y\) means that \(X-Y\) is Hermitian positive definite.}
\end{equation}
which often holds in practice~\cite{Zimmermann1970}, it can be shown that all
eigenvalues of $\mi H_r$ and thus of $H$ are real, and they come in positive
and negative pairs~\cite{GMG2009,GMG2011}.
We present an efficient parallel algorithm for computing the positive
eigenvalues and the corresponding right eigenvectors.
The algorithm makes use of existing kernel in ScaLAPACK as well as a new
kernel we developed for computing eigenpairs of a skew symmetric tridiagonal
matrix.
A simple transformation can be used to obtain the right eigenvectors
associated with the negative eigenvalues, as well as all left eigenvectors.

When $H$ is real, which is the case for systems with real-space inversion
symmetry, the Bethe--Salpeter eigenvalue problem can be transformed into a
product eigenvalue problem.
We propose an efficient and accurate parallel algorithm for solving the
product eigenvalue problem.

When facing the challenge of computing the eigenpairs of the non-Hermitian
matrix \eqref{eq:H-complex}, many researchers in the physics community choose
to drop the off-diagonal blocks, $B$ and $-\conj{B}$, and compute eigenpairs
of the Hermitian matrix $A$ only.
This approach is often known as the \emph{Tamm--Dancoff approximation}
(TDA)~\cite{Dancoff1950,RL2000,Tamm1945}.
We show that when the condition \eqref{eq:cond} holds, each eigenvalue of $A$
is an upper bound of the corresponding positive eigenvalue of $H$ when all
eigenvalues of $A$ and $H$ are sorted.
Our numerical experiment shows that TDA can introduce a non-negligible shift
of the spectral density of $H$, which is consistent with what has been
reported in the physics literature~\cite{GMG2009,PMA2013,Zimmermann1970}.


The rest of the paper is organized as follows.
In Section~\ref{sec:properties}, we discuss some basic properties of the
Bethe--Salpeter eigenvalue problem.
Then in Section~\ref{sec:algorithms}, we develop structure preserving
algorithms built on these properties as well as the additional
assumption~\eqref{eq:cond}.
Finally, we demonstrate the efficiency and accuracy of our proposed algorithms
in Section~\ref{sec:experiments} by several examples from physical models.

\section{Properties of Bethe--Salpeter eigenvalue problems}
\label{sec:properties}
In this section, we examine the special properties of the Bethe--Salpeter
Hamiltonian that allow us to develop efficient algorithms for computing its
eigenpairs.

\subsection{Relation to Hamiltonian eigenvalue problems}
\label{subsec:HEP}
We first show that $H$ belongs to a class of matrices known as \(J\)-symmetric
matrices.
Let \(J_n\) be the \(2n\times2n\) skew-symmetric matrix
\begin{equation}
\label{eq:J}
J_n=\begin{bmatrix} 0 & I_n \\ -I_n & 0 \end{bmatrix}.
\end{equation}
A matrix \(X\in\mathbb{C}^{2n\times2n}\) is called a \emph{\(J\)-symmetric
matrix} if \((XJ_n)\trans=XJ_n\)~\cite{MMT2003,MMT2006}.
When \(X\) is real, it is also called a \emph{Hamiltonian} matrix.
By definition, \(X\) is \(J\)-symmetric if and only if \(X\) admits the block
structure
\[
X=\begin{bmatrix} X_{11} & X_{12} \\ X_{21} & -X_{11}\trans \end{bmatrix}
\]
with \(X_{12}\) and \(X_{21}\) complex symmetric.
It can also be verified that this \(J\)-symmetric structure is preserved
under what is called a \emph{complex symplectic similarity transformation}
\[
\hat{H} = S^{-1}HS,
\]
where $S\in\mathbb{C}^{2n\times2n}$ is a \emph{complex symplectic matrix}
that satisfies \(S\trans J_nS=J_n\)~\cite{MMT2006}.
These properties are key to the development of several numerical algorithms
for computing the eigenvalues of dense \(J\)-symmetric matrices.
Examples of these algorithms include the Hamiltonian--Jacobi
algorithms~\cite{Byers1990,Mehl2008} and SR-like algorithms~\cite{Fassbender2007}.

The Bethe--Salpeter Hamiltonian matrix defined in~\eqref{eq:H-complex} is
clearly a \(J\)-symmetric matrix because
\[
H=\begin{bmatrix} A & B \\ -\conj B & -\conj A \end{bmatrix}
=\begin{bmatrix} A & B \\ -\conj B & -A\trans \end{bmatrix}.
\]
Although the algorithms for \(J\)-symmetric matrices can be used to solve the
Bethe--Salpeter eigenvalue problems, they do not take advantage of the
additional symmetry relationship between the \((1,2)\) and \((2,1)\) blocks in
\(H\).
This symmetry leads to the symmetry of \(\Lambda(H)\) as stated in the
following theorem.

\begin{theorem}[\cite{BFY2015}]
\label{thm:complex-symmetry}
Let \(H\) be of the form~\eqref{eq:H-complex} with \(A\) Hermitian and \(B\)
symmetric.
If \(\lambda\) is an eigenvalue of \(H\), then \(-\lambda\), \(\conj\lambda\),
\(-\conj\lambda\) are also eigenvalues of \(H\) with the same multiplicity.
\end{theorem}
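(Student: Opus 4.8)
The plan is to derive the full four-fold symmetry of $\Lambda(H)$ from two explicit similarity relations, obtaining the remaining relation by composing the other two. Throughout I use the elementary fact that similarity, transposition, and entrywise complex conjugation each preserve the Jordan structure of a matrix, hence preserve both the algebraic and the geometric multiplicity of every eigenvalue (with conjugation sending the eigenvalue $\mu$ to $\conj\mu$). The first relation, $\lambda\in\Lambda(H)\Rightarrow-\lambda\in\Lambda(H)$, follows from the $J$-symmetry already established: from $(HJ_n)\trans=HJ_n$ together with $J_n\trans=J_n\inv=-J_n$ one obtains $J_n\inv HJ_n=-H\trans$, so $H$ is similar to $-H\trans$, which has exactly the eigenvalues and multiplicities of $-H$. (This is the step that actually uses the hypotheses $A=A\herm$ and $B=B\trans$, since they are precisely what makes $H$ a $J$-symmetric matrix.)

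Second, for $\lambda\in\Lambda(H)\Rightarrow-\conj\lambda\in\Lambda(H)$, I would introduce the permutation $P=\begin{bmatrix}0&I_n\\I_n&0\end{bmatrix}$ (so $P\inv=P$) and verify by block multiplication that
\[
P\,\conj H\,P=P\begin{bmatrix}\conj A&\conj B\\-B&-A\end{bmatrix}P=\begin{bmatrix}-A&-B\\\conj B&\conj A\end{bmatrix}=-H .
\]
Hence $\conj H$ is similar to $-H$; since $\Lambda(\conj H)=\set{\conj\mu:\mu\in\Lambda(H)}$ with matching multiplicities, we conclude that $-\conj\lambda\in\Lambda(H)$ with the same multiplicity as $\lambda$. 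Composing the two relations, $\lambda\in\Lambda(H)$ first yields $-\conj\lambda\in\Lambda(H)$, and then applying the first relation to $-\conj\lambda$ gives $-(-\conj\lambda)=\conj\lambda\in\Lambda(H)$, all with the same multiplicity, which is exactly the assertion of the theorem.

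I do not expect any of the steps to present a genuine obstacle; the only point that requires care is bookkeeping — checking that each operation used (the symplectic similarity by $J_n$, transposition, and the combination of conjugation with the permutation similarity by $P$) really does preserve both kinds of multiplicity, and keeping track of which structural hypothesis is invoked where. In particular it is worth noting that the $-\conj\lambda$ symmetry holds for any matrix of the block form~\eqref{eq:H-complex} irrespective of~\eqref{eq:symmetry}, whereas the $-\lambda$ symmetry is the one that genuinely needs $A=A\herm$ and $B=B\trans$.
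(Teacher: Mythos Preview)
Your argument is correct: the similarity $J_n\inv HJ_n=-H\trans$ (from $J$-symmetry) and the relation $P\conj{H}P=-H$ (from the block form alone) are both valid, and together with the standard facts about preservation of Jordan structure under similarity, transposition, and conjugation they yield the full quadruple symmetry with multiplicities. Your remark that only the $-\lambda$ symmetry genuinely consumes the hypotheses $A=A\herm$, $B=B\trans$ is also accurate and worth keeping.

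The paper takes a different route. It does not prove Theorem~\ref{thm:complex-symmetry} directly but instead establishes Theorem~\ref{thm:equiv} --- the unitary similarity $Q\herm HQ=\mi H_r$ with $H_r$ a \emph{real} Hamiltonian matrix --- and then observes that Theorem~\ref{thm:complex-symmetry} follows from the well-known quadruple symmetry $\{\mu,-\mu,\conj\mu,-\conj\mu\}$ of real Hamiltonian spectra (multiplying by $\mi$ permutes this set). Your approach is more elementary and self-contained, and it isolates exactly which structural assumption drives each half of the symmetry; the paper's approach, on the other hand, subsumes the result into its central structural theorem and thereby connects the spectral symmetry to the algorithmic framework developed later. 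Either argument is perfectly acceptable here.
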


Unfortunately, complex symplectic transformations in general do \emph{not}
preserve the structure of Bethe--Salpeter Hamiltonian matrices.
To seek a class of fully structure preserving similarity transformations, we
show in the following theorem that solving a Bethe--Salpeter eigenvalue
problem is equivalent to solving a real Hamiltonian eigenvalue problem.
This is the main theoretical result of this paper.

\begin{theorem}
\label{thm:equiv}
A Bethe--Salpeter eigenvalue problem can be reduced to a real Hamiltonian
eigenvalue problem, and vice versa.
\end{theorem}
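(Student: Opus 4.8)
The plan is to exhibit an explicit similarity turning the complex \(J\)-symmetric matrix \(H\) into \(\mi\) times the real matrix \(H_r\) of~\eqref{eq:H_r}, to check that \(H_r\) is in fact Hamiltonian, and then to run the construction backwards. Throughout I would split the data into real and imaginary parts, \(A=A_1+\mi A_2\) and \(B=B_1+\mi B_2\) with \(A_1,A_2,B_1,B_2\) real; the symmetry relations~\eqref{eq:symmetry} say exactly that \(A_1=A_1\trans\), \(A_2=-A_2\trans\), \(B_1=B_1\trans\), and \(B_2=B_2\trans\). The guiding observation is that these four real \(n\times n\) blocks carry precisely the information needed to fill a real \(2n\times2n\) Hamiltonian matrix once the two \(n\)-blocks of \(H\) are recombined with relative phases \(\pm\mi\).

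Concretely, I would conjugate \(H\) by the unitary matrix
\[
S=\frac{1}{\sqrt2}\begin{bmatrix} I_n & -\mi I_n \\ I_n & \mi I_n \end{bmatrix},\qquad S\inv=S\herm.
\]
A block multiplication gives \(S\inv HS=\mi H_r\), where the off-diagonal blocks \(B,-\conj B\) of \(H\) recombine into \(\Re(A+B)\) and \(\Re(A-B)\), the diagonal blocks \(A,-\conj A\) recombine into \(\Im(A+B)\) and \(\Im(A-B)\), and a single overall \(\mi\) is factored out. Since \(S\) is invertible, \(H\) and \(\mi H_r\) have the same eigenvalues with the same multiplicities, and \(H_r z=\mu z\) holds if and only if \(H(Sz)=(\mi\mu)(Sz)\); so the Bethe--Salpeter problem for \(H\) is, eigenpair by eigenpair, the real eigenproblem for \(H_r\), the spectra differing only by the rotation \(\mu\mapsto\mi\mu\) (which is consistent with the fourfold symmetry of Theorem~\ref{thm:complex-symmetry}). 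That \(H_r\) is Hamiltonian then follows by inspection: written out,
\[
H_r=\begin{bmatrix} A_2+B_2 & -(A_1-B_1) \\ A_1+B_1 & A_2-B_2 \end{bmatrix},
\]
the \((1,2)\) and \((2,1)\) blocks are symmetric because \(A_1,B_1\) are, and \(A_2-B_2=-(A_2+B_2)\trans\) because \(A_2\) is skew-symmetric while \(B_2\) is symmetric, so \((H_rJ_n)\trans=H_rJ_n\).

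For the converse, let \(G=\begin{bmatrix}G_{11}&G_{12}\\G_{21}&-G_{11}\trans\end{bmatrix}\) be an arbitrary real Hamiltonian matrix, so \(G_{12}=G_{12}\trans\) and \(G_{21}=G_{21}\trans\). Inverting the above correspondence, I would set
\[
A=\tfrac12(G_{21}-G_{12})+\tfrac{\mi}{2}(G_{11}-G_{11}\trans),\qquad
B=\tfrac12(G_{21}+G_{12})+\tfrac{\mi}{2}(G_{11}+G_{11}\trans).
\]
Then \(\Re A\) and \(\Re B\) are symmetric, \(\Im A\) is skew-symmetric, and \(\Im B\) is symmetric, so \(A=A\herm\) and \(B=B\trans\); hence the matrix \(H\) assembled from this pair via~\eqref{eq:H-complex} is a Bethe--Salpeter Hamiltonian, and by construction \(\Re(A\pm B)\) and \(\Im(A\pm B)\) are exactly the blocks of \(G\), so the previous step yields \(S\inv HS=\mi G\). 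Thus a real Hamiltonian eigenproblem likewise reduces to a Bethe--Salpeter one, which completes the equivalence. I do not anticipate a serious obstacle: once \(S\) is in hand everything is block arithmetic. The only real content is discovering \(S\) — the principle being that conjugating a \(2\times2\)-block matrix by something that mixes the two \(n\)-blocks with relative phases \(\pm\mi\) trades ``complex data constrained to be Hermitian/symmetric'' for ``unconstrained real data,'' and one must pin down the precise unitary that lands on the sign pattern of~\eqref{eq:H_r} — together with the bookkeeping ensuring that \(A=A\herm\), \(B=B\trans\) correspond, in both directions, to the Hamiltonian pattern (symmetric off-diagonal blocks and \((2,2)\)-block equal to minus the transpose of the \((1,1)\)-block). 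A minor point is to phrase ``reduces to'' at the level of eigenpairs — eigenvalues related by the fixed factor \(\mi\), eigenvectors by the fixed invertible \(S\) — rather than merely as a statement about spectra.
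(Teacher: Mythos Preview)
Your proposal is correct and follows essentially the same approach as the paper: the same unitary \(S=Q\) is used to obtain \(Q\herm HQ=\mi H_r\), and the converse is handled by solving for \(A,B\) in terms of the blocks of a given real Hamiltonian matrix. The only cosmetic differences are that the paper records the result as \(Q\herm HQ=-\mi J_nM\) with \(M\) real symmetric (from which the Hamiltonian structure of \(H_r=-J_nM\) is immediate) rather than verifying it blockwise, and in the converse the paper conjugates in the opposite direction, \(QH_rQ\herm=\mi H\), leading to \(A,B\) that are the negatives of yours---an immaterial sign discrepancy since \((-A,-B)\) also satisfies~\eqref{eq:symmetry}.
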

\begin{proof}
Let
\[
Q=\frac{1}{\sqrt{2}}
\begin{bmatrix} I_n & -\mi I_n \\ I_n & \mi I_n \end{bmatrix}.
\]
Then
\[
Q\herm\begin{bmatrix} A & B \\ -\conj B & -\conj A \end{bmatrix}Q
=\mi\begin{bmatrix} \Im(A+B) & -\Re(A-B) \\ \Re(A+B) & \Im(A-B) \end{bmatrix}
=-\mi J_n M,
\]
where
\begin{equation}
\label{eq:embedding}
M=\begin{bmatrix} \Re(A+B) & \Im(A-B) \\ -\Im(A+B) & \Re(A-B) \end{bmatrix}
\end{equation}
is a real symmetric matrix.
Therefore, any Bethe--Salpeter eigenvalue problem can be converted to a real
Hamiltonian eigenvalue problem.

On the other hand, let \(H_r\) be a real \(2n\times2n\) Hamiltonian matrix of
the form
\[
H_r=\begin{bmatrix} H_{11} & H_{12} \\ H_{21} & -H_{11}\trans \end{bmatrix},
\]
where \(H_{12}\trans=H_{12}\), \(H_{21}\trans=H_{21}\).
We set
\[
A=\frac{H_{12}-H_{21}}{2}+\mi\frac{H_{11}\trans-H_{11}}{2},
\qquad B=-\frac{H_{12}+H_{21}}{2}-\mi\frac{H_{11}\trans+H_{11}}{2}.
\]
It can be verified that \(A\herm=A\), \(B\trans=B\), and
\[
QH_rQ\herm=\mi\begin{bmatrix} A & B \\ -\conj B & -\conj A \end{bmatrix}.
\]
Therefore, we can convert a real Hamiltonian eigenvalue problem to a
Bethe--Salpeter eigenvalue problem.
This completes the proof.
\end{proof}

Theorem~\ref{thm:equiv} fully characterizes the spectral properties of general
Bethe--Salpeter eigenvalue problems.
Theorem~\ref{thm:complex-symmetry} is a direct consequence of
Theorem~\ref{thm:equiv}.
As a result, several existing algorithms (see~\cite{BKM2005}) for solving 
Hamiltonian eigenvalue problems can be applied to the matrix \(H_r\) defined
in~\eqref{eq:H_r}.

The primary interest of this paper is drawn to the case when the
property~\eqref{eq:cond} holds.
In this case, the Bethe--Salpeter eigenvalue problem is essentially a
symmetric eigenvalue problem because
\begin{equation}
\label{eq:GMG}
H=\begin{bmatrix} I_n & 0 \\ 0 & -I_n \end{bmatrix}
\begin{bmatrix} A & B \\ \conj B & \conj A \end{bmatrix}
\end{equation}
is the product of two Hermitian matrices, and in addition, one of them is
positive definite~\cite{GMG2009}.
Therefore, \(H\) is diagonalizable and has real spectrum.
In addition, the eigenvectors of \(H\) also admit special structures.
These properties are summarized in the following theorem.

\begin{theorem}
\label{thm:positive-definite}
Let \(H\) be of the form~\eqref{eq:H-complex} satisfying~\eqref{eq:symmetry}
and~\eqref{eq:cond}.
Then there exist \(X_1\), \(X_2\in\mathbb{C}^{n\times n}\) and positive
numbers \(\lambda_1\) \(\lambda_2\), \(\dotsc\), \(\lambda_n\in\mathbb{R}\)
such that
\[
HX=X\begin{bmatrix} \Lambda_+ & \\ & -\Lambda_+ \end{bmatrix},
\qquad
Y\herm H=\begin{bmatrix} \Lambda_+ & \\ & -\Lambda_+ \end{bmatrix}Y\herm,
\qquad
Y\herm X=I_{2n},
\]
where
\[
X=\begin{bmatrix} X_1 & \conj X_2 \\ X_2 & \conj X_1 \end{bmatrix},
\qquad
Y=\begin{bmatrix} X_1 & -\conj X_2 \\ -X_2 & \conj X_1 \end{bmatrix},
\]
and \(\Lambda_+=\diag\set{\lambda_1,\dotsc,\lambda_n}\).
\end{theorem}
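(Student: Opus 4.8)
The plan is to build everything on the factorization~\eqref{eq:GMG}. I would first write $H=\Sigma W$ with
\[
\Sigma\defeq\begin{bmatrix} I_n & 0 \\ 0 & -I_n \end{bmatrix},
\qquad
W\defeq\begin{bmatrix} A & B \\ \conj B & \conj A \end{bmatrix}\GT0 .
\]
Since $\Sigma=\Sigma\herm=\Sigma\inv$ and $W=W\herm$, we have $H\herm=W\Sigma=\Sigma H\Sigma$; equivalently, $H$ is self-adjoint with respect to the indefinite inner product $[x,y]\defeq y\herm\Sigma x$. As recalled just before the theorem, $W\GT0$ makes $H$ diagonalizable with real spectrum; moreover $H$ is nonsingular (so is each of $\Sigma$, $W$), hence $0\notin\Lambda(H)$, and by Theorem~\ref{thm:complex-symmetry} its eigenvalues occur in pairs $\pm\lambda$ of equal multiplicity. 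Thus exactly $n$ of them, counted with multiplicity, are positive; denote them $\lambda_1,\dotsc,\lambda_n>0$ and set $\Lambda_+=\diag\set{\lambda_1,\dotsc,\lambda_n}$. (These spectral facts also follow from Theorem~\ref{thm:equiv}: $Q\herm HQ=-\mi J_nM$ with $M=Q\herm WQ$ real symmetric positive definite, and $J_nM$ is similar to the real skew-symmetric matrix $L\trans J_nL$ for any factorization $M=LL\trans$.)

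Next I would pin down a conveniently normalized eigenbasis of the positive part. For an eigenpair $Hx=\lambda x$ we have $x\herm Wx=x\herm\Sigma Hx=\lambda\,x\herm\Sigma x$ (using $W=\Sigma H$), so $x\herm\Sigma x=\lambda\inv x\herm Wx$ carries the sign of $\lambda$ because $W\GT0$. Hence $[\cdot,\cdot]$ is Hermitian positive definite on the span of all positive eigenspaces and negative definite on the span of all negative ones, while eigenvectors attached to distinct eigenvalues are $[\cdot,\cdot]$-orthogonal (the standard argument from self-adjointness and realness of the spectrum). Applying Gram--Schmidt eigenspace by eigenspace therefore yields $X_+=\left[\begin{smallmatrix} X_1 \\ X_2 \end{smallmatrix}\right]\in\mathbb{C}^{2n\times n}$ with $HX_+=X_+\Lambda_+$ and $X_+\herm\Sigma X_+=I_n$.

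Then I would obtain the negative part from a flip symmetry. Let $K\defeq\left[\begin{smallmatrix} 0 & I_n \\ I_n & 0 \end{smallmatrix}\right]$. Reading off the block structure~\eqref{eq:H-complex} one checks $K\conj HK=-H$ and $K\Sigma K=-\Sigma$. Since the $\lambda_j$ are real, $Hx=\lambda x$ implies $H(K\conj x)=-\lambda(K\conj x)$; and $(K\conj x_i)\herm\Sigma(K\conj x_j)=-\conj{x_i\herm\Sigma x_j}$ by a one-line computation using $K\Sigma K=-\Sigma$. Applying this to the columns of $X_+$, and using that the $\lambda$- and $(-\lambda)$-eigenspaces share the same dimension, shows that $K\conj{X_+}=\left[\begin{smallmatrix} \conj X_2 \\ \conj X_1 \end{smallmatrix}\right]$ is an eigenbasis of the negative part with $H(K\conj{X_+})=(K\conj{X_+})(-\Lambda_+)$ and $(K\conj{X_+})\herm\Sigma(K\conj{X_+})=-I_n$. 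Setting $X=\left[\,X_+\ \ K\conj{X_+}\,\right]=\left[\begin{smallmatrix} X_1 & \conj X_2 \\ X_2 & \conj X_1 \end{smallmatrix}\right]$ and using $[\cdot,\cdot]$-orthogonality between the positive and negative parts (their eigenvalues differ, as $\lambda>0$), we get $HX=X\diag\set{\Lambda_+,-\Lambda_+}$ and $X\herm\Sigma X=\Sigma$; in particular $X$ is nonsingular.

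Finally, the left eigenvectors require no extra work: put $Y\defeq\Sigma X\Sigma=\left[\begin{smallmatrix} X_1 & -\conj X_2 \\ -X_2 & \conj X_1 \end{smallmatrix}\right]$. Then $Y\herm X=\Sigma X\herm\Sigma X=\Sigma\Sigma=I_{2n}$, so $Y\herm=X\inv$, and consequently $Y\herm H=X\inv HXX\inv=\diag\set{\Lambda_+,-\Lambda_+}\,Y\herm$, which finishes the argument. The crux, I expect, will be the middle two steps: first recognizing that the indefinite form $[\cdot,\cdot]$ is in fact \emph{definite} on each eigenspace --- which is what makes the normalization $X_+\herm\Sigma X_+=I_n$ attainable, multiplicities and all --- and then verifying that the flip symmetry is \emph{exactly} compatible with that normalization, so that the required block pattern $\left[\begin{smallmatrix} X_1 & \conj X_2 \\ X_2 & \conj X_1 \end{smallmatrix}\right]$ and the identity $X\herm\Sigma X=\Sigma$ hold simultaneously. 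Everything else reduces to routine $2\times2$ block manipulations.
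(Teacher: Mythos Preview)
Your argument is correct and is organized around the indefinite inner product \([x,y]=y\herm\Sigma x\): you first show that \([\cdot,\cdot]\) is definite on each sign-part of the spectrum, which lets you normalize \(X_+\) so that \(X_+\herm\Sigma X_+=I_n\); you then use the flip symmetry \(K\conj HK=-H\) (with \(K=\bigl[\begin{smallmatrix}0&I\\ I&0\end{smallmatrix}\bigr]\)) to manufacture the negative eigenvectors as \(K\conj{X_+}\), making the block pattern of \(X\) automatic; and you conclude \(Y\herm X=I_{2n}\) from \(X\herm\Sigma X=\Sigma\) together with \(Y=\Sigma X\Sigma\). The paper reaches the same destination by a somewhat different route: it invokes simultaneous congruence diagonalization of the pair \((S,\Omega)\) to produce a matrix \(C\) with \(C\herm\Omega C=I_{2n}\) and \(C\herm SC=\diag\{\Lambda_+,-\Lambda_+\}^{-1}\), reads off \(X_1,X_2\) from the first block column of \(C\Lambda_+^{1/2}\), verifies the companion relations~\eqref{eq:eigvec_r-}--\eqref{eq:eigvec_l-} directly, and then closes the biorthogonality \(Y\herm X=I_{2n}\) via a Sylvester-equation argument (\(Z\Lambda_+=-\Lambda_+Z\Rightarrow Z=0\)). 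Your systematic use of \([\cdot,\cdot]\)-orthogonality between eigenspaces replaces that Sylvester step and makes the off-diagonal vanishing of \(Y\herm X\) more transparent; the paper's congruence viewpoint, on the other hand, connects more directly to the generalized Hermitian--definite eigensolver framing that Section~\ref{subsec:complex} builds on.
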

\begin{proof}
Let
\[
S=\begin{bmatrix} I_n & 0 \\ 0 & -I_n \end{bmatrix},
\qquad \Omega=\begin{bmatrix} A & B \\ \conj B & \conj A \end{bmatrix}.
\]
It follows from~\eqref{eq:GMG} that the eigenvalue problem \(Hx=\lambda x\) is
equivalent to \(Sx=\lambda^{-1}\Omega x\), which is a generalized
Hermitian--definite eigenvalue problem.
It is well known (see, for example,~\cite[Section 8.7]{GV1996}) that \(S\) and
\(\Omega\) are simultaneously diagonalizable by congruence transformations,%
\footnote{By congruence transformation, we mean a linear map on
\(\mathbb{C}^{m\times m}\) of the form \(X\mapsto C\herm XC\) where
\(C\in\mathbb{C}^{m\times m}\) is nonsingular.}
and hence \(H\) is diagonalizable and has real eigenvalues.
According to Theorem~\ref{thm:complex-symmetry}, there exist positive numbers
\(\lambda_1\) \(\lambda_2\), \(\dotsc\), \(\lambda_n\) such that \(H\) is
similar to \(\diag\set{\Lambda_+,-\Lambda_+}\), where
\(\Lambda_+=\diag\set{\lambda_1,\dotsc,\lambda_n}\).

To determine the structure of the eigenvectors of \(H\), we start with any
nonsingular matrix
\[
C=\begin{bmatrix} C_{11} & C_{12} \\ C_{21} & C_{22} \end{bmatrix}
\in\mathbb{C}^{2n\times 2n}
\]
satisfying \(C\herm \Omega C=I_{2n}\) and
\(C\herm SC=\diag\set{\Lambda_+,-\Lambda_+}^{-1}\).
Then \(C^{-1}HC=\diag\set{\Lambda_+,-\Lambda_+}\).
By setting \(X_1=C_{11}\Lambda_+^{1/2}\) and \(X_2=C_{21}\Lambda_+^{1/2}\), we
obtain that
\begin{equation}
\label{eq:eigvec_r+}
H\begin{bmatrix} X_1 \\ X_2 \end{bmatrix}
=\begin{bmatrix} X_1 \\ X_2 \end{bmatrix}\Lambda_+
\end{equation}
and
\begin{equation}
\label{eq:normalization}
X_1\herm X_1-X_2\herm X_2=I_n,
\end{equation}
It is straightforward to verify that the following equations are
equivalent to~\eqref{eq:eigvec_r+}:
\begin{align}
\label{eq:eigvec_r-}
&H\begin{bmatrix} \conj X_2 \\ \conj X_1 \end{bmatrix}
=-\begin{bmatrix} \conj X_2 \\ \conj X_1 \end{bmatrix}\Lambda_+,\\
\label{eq:eigvec_l+}
&\bigl[X_1\herm, -X_2\herm\bigr]H=\Lambda_+\bigl[X_1\herm, -X_2\herm\bigr],\\
\label{eq:eigvec_l-}
&\bigl[-\conj X_2\herm, \conj X_1\herm\bigr]H
=-\Lambda_+\bigl[-\conj X_2\herm, \conj X_1\herm\bigr].
\end{align}
Thus we have obtained all right and left eigenvectors of \(H\).
Finally, it follows from~\eqref{eq:eigvec_r+} and~\eqref{eq:eigvec_l-} that
\[
\bigl(\conj X_1\herm X_2-\conj X_2\herm X_1\bigr)\Lambda_+
=\bigl[-\conj X_2\herm, \conj X_1\herm\bigr]H
\begin{bmatrix} X_1 \\ X_2 \end{bmatrix}
=-\Lambda_+\bigl(\conj X_1\herm X_2-\conj X_2\herm X_1\bigr).
\]
Since \(\Lambda_+\) and \(-\Lambda_+\) have no common eigenvalue, the
homogeneous Sylvester equation \(Z\Lambda_+=-\Lambda_+Z\) has a unique
solution \(Z=0\).
Hence \(\conj X_1\herm X_2-\conj X_2\herm X_1=0\).
Combining this result with~\eqref{eq:normalization}, we conclude that
\(Y\herm X=I_{2n}\).
This completes the proof.
\end{proof}

We now consider a relatively simple case in which \(H\) is a real matrix.
In this case \eqref{eq:H-complex} simplifies to
\begin{equation}
\label{eq:H-real}
H=\begin{bmatrix} A & B \\ -B & -A \end{bmatrix},
\end{equation}
where both \(A\) and \(B\) are \(n\times n\) real symmetric matrices.
By definition \(H\) is a real Hamiltonian matrix.
Performing a symplectic orthogonal similarity transformation yields a block
cyclic form
\[
\frac{1}{\sqrt{2}}\begin{bmatrix} I_n & I_n \\ -I_n & I_n \end{bmatrix}\trans
\cdot H\cdot
\frac{1}{\sqrt{2}}\begin{bmatrix} I_n & I_n \\ -I_n & I_n \end{bmatrix}
=\begin{bmatrix} 0 & A+B \\ A-B & 0 \end{bmatrix}.
\]
This suggests that the eigenvalues of \(H\) are the square roots of the
eigenvalues of \((A+B)(A-B)\).
Since both \(A\) and \(B\) are real, \eqref{eq:cond} simplifies to that
\[
\begin{bmatrix} A & B \\ B & A \end{bmatrix}\GT0,
\]
or equivalently, \emph{both \(A+B\) and \(A-B\) are positive definite}.
Under this condition, the real Bethe--Salpeter eigenvalue problem is also
known as a \emph{linear response eigenvalue problem} which recently has attracted a
lot of attention (see, for example, \cite{BL2012,BL2013}).
In contrast to many recent developments~\cite{BL2013,RBLG2012} in linear
response eigenvalue problems that focus on large sparse eigensolvers, we
develop dense eigensolvers for this eigenvalue problem in the next section.

\subsection{Tamm--Dancoff approximation}
\label{subsec:TDA}

When the off-diagonal blocks of the Bethe--Salpeter Hamiltonian are set to
zero, known in the physics community as the Tamm--Dancoff
approximation~\cite{Dancoff1950,RL2000,Tamm1945}, the Bethe--Salpeter
eigenvalue problem reduces to a Hermitian eigenvalue problem.
One can use efficient algorithms available in ScaLAPACK to compute eigenpairs
of $A$.
In many cases, the results are found to be sufficiently close to the
eigenvalues of the full Bethe--Salpeter Hamiltonian and explain experiment.
However, in general, this simplification can lead to noticeable difference in
the computed spectrum.

In this subsection, we show that Tamm--Dancoff approximation consistently
overestimate the positive eigenvalues when the property~\eqref{eq:cond} holds.
More precisely, we have
\[
\lambda_j(H)\leq\lambda_j(A)
\]
for \(j=1\), \(2\), \(\dotsc\), \(n\), where \(\lambda_j(\cdot)\) denote the
\(j\)th \emph{largest} eigenvalue.
That is, \emph{every} positive eigenvalue obtained by TDA is greater than or
equal to the corresponding one of \(H\).
This theoretical result is consistent with several computational experiments
reported in~\cite{GMG2009,PMA2013}. However, we have not been able to 
find a rigorous proof of such a result in physics literature. To the best
of our knowledge, this result is not well known in the numerical linear 
algebra community, and its proof is not entirely trivial.

We provide a proof of this important property which we state clearly in
Theorem~\ref{thm:TDA}.  Our proof makes use of the following lemma which 
appeared relatively recently in \cite{BK2000}.

\begin{lemma}[\cite{BK2000}]
\label{lem:AM-GM}
Let \(A_1\), \(A_2\in\mathbb{C}^{n\times n}\) be Hermitian positive definite.
Then
\[
\sqrt{\lambda_j(A_1A_2)}\leq\lambda_j\Bigl(\frac{A_1+A_2}{2}\Bigr),
\]
for \(j=1\), \(2\), \(\dotsc\), \(n\).
\end{lemma}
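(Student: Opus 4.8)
The plan is to recast the statement as an arithmetic--geometric mean inequality for singular values and then prove that inequality directly by a short block--matrix argument. First I would note that \(\lambda_j(A_1A_2)\) is a well defined positive real number, since \(A_1A_2\) is similar to the Hermitian positive definite matrix \(A_1^{1/2}A_2A_1^{1/2}\), and in fact \(\lambda_j(A_1A_2)=\lambda_j(A_1^{1/2}A_2A_1^{1/2})\) for every \(j\). Because
\[
A_1^{1/2}A_2A_1^{1/2}=\bigl(A_1^{1/2}A_2^{1/2}\bigr)\bigl(A_1^{1/2}A_2^{1/2}\bigr)\herm,
\]
we have \(\sqrt{\lambda_j(A_1A_2)}=\sigma_j\bigl(A_1^{1/2}A_2^{1/2}\bigr)\), where \(\sigma_j(\cdot)\) denotes the \(j\)th largest singular value. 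Setting \(C=A_1^{1/2}\) and \(D=A_2^{1/2}\), so that \(C\) and \(D\) are Hermitian positive definite with \(C^2=A_1\), \(D^2=A_2\), the assertion is equivalent to the singular-value arithmetic--geometric mean inequality
\[
\sigma_j(CD)\le\tfrac12\lambda_j(C^2+D^2),\qquad j=1,\dotsc,n,
\]
and this is what I would establish.

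To do so I would introduce the \(2n\times 2n\) Hermitian positive semidefinite matrices
\[
T_\pm=\begin{bmatrix} C \\ \pm D\end{bmatrix}\begin{bmatrix} C & \pm D\end{bmatrix}
=\begin{bmatrix} C^2 & \pm CD \\ \pm DC & D^2\end{bmatrix}.
\]
The nonzero eigenvalues of \(T_+\) coincide with those of \(\begin{bmatrix} C & D\end{bmatrix}\begin{bmatrix} C \\ D\end{bmatrix}=C^2+D^2\), which is positive definite (since \(C^2+D^2\succeq A_1\succ0\)) and hence has exactly \(n\) positive eigenvalues; as \(T_+\) has rank at most \(n\), this forces \(\lambda_j(T_+)=\lambda_j(C^2+D^2)\) for \(j=1,\dotsc,n\). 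Since \(T_-\succeq0\), we have \(T_+-T_-\preceq T_+\), and \(T_+-T_-=\begin{bmatrix} 0 & 2CD \\ 2DC & 0\end{bmatrix}\) has eigenvalues \(\pm2\sigma_i(CD)\), so its \(j\)th largest eigenvalue equals \(2\sigma_j(CD)\) for \(j\le n\). Weyl's monotonicity theorem then gives
\[
2\sigma_j(CD)=\lambda_j(T_+-T_-)\le\lambda_j(T_+)=\lambda_j(C^2+D^2),\qquad j=1,\dotsc,n,
\]
which is the required inequality; combined with the reduction above, this proves the lemma.

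The step I expect to be the main obstacle is pinning down the constant \(\tfrac12\). A crude bound such as \(\diag(C^2,D^2)\succeq\begin{bmatrix} 0 & CD \\ DC & 0\end{bmatrix}\) only shows that \(\sigma_j(CD)\) is dominated by the \(j\)th largest element of the merged spectrum of \(C^2\) and \(D^2\), which can be strictly larger than \(\tfrac12\lambda_j(C^2+D^2)\); likewise, a route through the matrix geometric mean \(G\) of \(A_1\) and \(A_2\) (which satisfies \(G\preceq\tfrac12(A_1+A_2)\)) fails, because \(\lambda_j(A_1A_2)\) need not be dominated by \(\lambda_j(G^2)\). The argument above circumvents these difficulties precisely through the eigenvalue--count identity \(\lambda_j(T_+)=\lambda_j(C^2+D^2)\) for \(j\le n\) — where the positive definiteness of \(C^2+D^2\) is what makes the count exact — together with the choice of \(T_-\) that makes \(T_+-T_-\) exactly the off-diagonal block matrix whose leading eigenvalues are the sought singular values.
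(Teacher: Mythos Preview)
Your proof is correct. The paper does not prove this lemma at all---it simply quotes the result from Bhatia and Kittaneh~\cite{BK2000} and uses it as a black box in the proof of Theorem~\ref{thm:TDA}. Your block-matrix argument, reducing to the singular-value inequality \(2\sigma_j(CD)\le\lambda_j(C^2+D^2)\) via \(T_\pm\) and Weyl monotonicity, is in fact essentially the original proof given by Bhatia and Kittaneh; so you have independently reconstructed the source rather than found an alternative route.
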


With the help of this arithmetic--geometric inequality on eigenvalues, we
prove the claim we have made in the following theorem.

\begin{theorem}
\label{thm:TDA}
Let \(H\) be as defined in~\eqref{eq:H-complex}.
Then under the conditions~\eqref{eq:symmetry} and~\eqref{eq:cond}, we have
\[
\lambda_j(H)\leq\lambda_j(A)
\]
for \(j=1\), \(2\), \(\dotsc\), \(n\).
\end{theorem}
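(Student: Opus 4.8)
The plan is to reduce the inequality \(\lambda_j(H)\le\lambda_j(A)\) to an application of Lemma~\ref{lem:AM-GM}, using the observation that the positive eigenvalues of \(H\) can be expressed in terms of a product of two Hermitian positive definite matrices. First I would recall that under~\eqref{eq:cond} the matrix \(\Omega=\begin{bmatrix} A & B \\ \conj B & \conj A \end{bmatrix}\) is Hermitian positive definite, and that by~\eqref{eq:GMG} the eigenvalue problem \(Hx=\lambda x\) is equivalent to the generalized Hermitian--definite problem \(Sx=\lambda^{-1}\Omega x\) with \(S=\diag\set{I_n,-I_n}\). The key reduction is to pass to the real Hamiltonian picture: since \(S\) is itself a congruence from \(\Omega\) to a block-diagonal matrix after an appropriate block LDL-type factorization is unnecessary here, the cleaner route is to use the factorization \(\Omega = L L\herm\) (Cholesky) and note that \(L\herm S L\) is Hermitian with eigenvalues \(\lambda_j^{-1}\); hence the positive eigenvalues \(\lambda_j(H)\) are exactly the positive eigenvalues of \((L\herm S L)^{-1}\), equivalently of \(L^{-1} S\iherm L\inv\) viewed appropriately. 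This is somewhat awkward in the complex \(2n\times 2n\) form, so instead I would work directly with the structure uncovered in Theorem~\ref{thm:positive-definite}.

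Concretely, from~\eqref{eq:eigvec_r+} and~\eqref{eq:normalization} we have \(H\begin{bmatrix} X_1 \\ X_2 \end{bmatrix}=\begin{bmatrix} X_1 \\ X_2 \end{bmatrix}\Lambda_+\) with \(X_1\herm X_1-X_2\herm X_2=I_n\). Writing out the two block rows of~\eqref{eq:eigvec_r+} gives \(AX_1+BX_2=X_1\Lambda_+\) and \(-\conj B X_1-\conj A X_2=X_2\Lambda_+\). Taking conjugates of the second and combining, one can eliminate to obtain a relation of the form \((A+B)(\text{something})(A-B)(\text{something})=(\text{something})\Lambda_+^2\), mirroring the real case where the eigenvalues of \(H\) are the square roots of the eigenvalues of \((A+B)(A-B)\). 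The cleanest formulation: set \(P=\begin{bmatrix} A+B \\ \end{bmatrix}\)-type combinations via the unitary \(Q\) of Theorem~\ref{thm:equiv}, under which \(Q\herm H Q=-\mi J_n M\) with \(M\) real symmetric and positive definite (positive definiteness of \(M\) follows from~\eqref{eq:cond} since \(M=Q\herm\Omega Q\) up to the \(\mi\) factors — this should be checked). Then \(\Lambda_+^2\) equals the set of positive eigenvalues of \((J_n M)^2=-M^{1/2}J_n M J_n M^{1/2}\cdot(\dots)\), and the symmetric positive definite matrix \(M\), partitioned as \(\begin{bmatrix} M_{11} & M_{12} \\ M_{12}\trans & M_{22} \end{bmatrix}\), has diagonal blocks \(M_{11}=\Re(A+B)\) and \(M_{22}=\Re(A-B)\); crucially \(\lambda_j(H)^2=\lambda_j(M_{11}^{1/2}KM_{11}^{1/2})\) for a suitable Hermitian positive definite \(K\) built from the Schur complement, which by the Cauchy interlacing / Fischer-type argument is bounded above so that \(\lambda_j(H)^2\le\lambda_j(M_{11})\lambda_j(M_{22})\) is \emph{not} quite what we want — instead one wants \(\lambda_j(H)\le\lambda_j\bigl(\tfrac{1}{2}(\text{blocks})\bigr)\).

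The decisive step, and the one I expect to be the main obstacle, is to identify two Hermitian positive definite matrices \(A_1,A_2\) with \(\tfrac{1}{2}(A_1+A_2)=A\) and \(\lambda_j(A_1A_2)=\lambda_j(H)^2\), so that Lemma~\ref{lem:AM-GM} gives \(\lambda_j(H)=\sqrt{\lambda_j(A_1A_2)}\le\lambda_j(A)\) directly. The natural candidates come from the ``square root'' structure: under~\eqref{eq:cond}, one shows that the positive eigenvalues of \(H\) are the singular-value-like quantities \(\sqrt{\lambda_j\bigl((A+B)^{1/2}\text{-twisted product}\bigr)}\), and — using that the full \(\Omega\GT0\) forces both the \((1,1)\) block \(A\) and a Schur-complement-type block to be positive definite — one arranges \(A_1\) and \(A_2\) as the two ``arms'' of this product with arithmetic mean exactly \(A\). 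Verifying \(\tfrac{1}{2}(A_1+A_2)=A\) will require exploiting the specific Bethe--Salpeter block pattern (the \((2,1)\) block being \(\conj B\) rather than \(B\trans\) in a different sign convention), and checking positive definiteness of both \(A_1,A_2\) is where condition~\eqref{eq:cond} is genuinely used rather than just \(A\GT0\). Once that algebraic identification is in place, the conclusion is immediate from Lemma~\ref{lem:AM-GM} applied indexwise in \(j\).
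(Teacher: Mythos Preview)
Your plan correctly singles out Lemma~\ref{lem:AM-GM} as the key tool and correctly formulates the target: find Hermitian positive definite \(A_1,A_2\) with \(\tfrac12(A_1+A_2)=A\) and \(\lambda_j(A_1A_2)=\lambda_j(H)^2\). The genuine gap is that you are searching for such \(A_1,A_2\) in \(\mathbb{C}^{n\times n}\), and there are no natural candidates there. Since \(B\) is complex \emph{symmetric} rather than Hermitian, the obvious choices \(A\pm B\) (which work in the real case of Section~\ref{subsec:HEP}) are not Hermitian, and your Schur-complement and \(M_{11},M_{22}\) attempts run aground for the same structural reason: the block partition of \(M\) does not furnish two \(n\times n\) Hermitian positive definite factors whose mean is \(A\). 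You recognize this yourself when you note that the \(M_{11},M_{22}\) route ``is not quite what we want,'' but the proposal never recovers from it.

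The paper's resolution is to change dimension. With the real embeddings
\[
\tilde A=\begin{bmatrix}\Re(A)&\Im(A)\\-\Im(A)&\Re(A)\end{bmatrix},\qquad
\tilde B=\begin{bmatrix}\Re(B)&-\Im(B)\\-\Im(B)&-\Re(B)\end{bmatrix}\in\mathbb{R}^{2n\times 2n},
\]
one has \(\tilde A+\tilde B=M\) (the very matrix from Theorem~\ref{thm:equiv}) and \(\tilde A-\tilde B=J_n\trans M J_n\); both are real symmetric positive definite under~\eqref{eq:cond}, and their arithmetic mean is \(\tilde A\). From \(Q\herm HQ=-\mi J_n M\) one computes \(\bigl(-\mi J_n M\bigr)^2=-J_nMJ_nM\), whose eigenvalues coincide with those of \((\tilde A-\tilde B)(\tilde A+\tilde B)\), so \(\lambda_j(H)^2=\lambda_{2j}\bigl((\tilde A-\tilde B)(\tilde A+\tilde B)\bigr)\). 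Lemma~\ref{lem:AM-GM} then yields \(\lambda_j(H)\le\lambda_{2j}(\tilde A)\), and the argument closes because the eigenvalues of \(\tilde A\) are exactly those of \(A\) with doubled multiplicity, giving \(\lambda_{2j}(\tilde A)=\lambda_j(A)\). The index shift \(j\mapsto 2j\), together with this multiplicity doubling, is precisely the device that reconciles the \(2n\times 2n\) factorization with the \(n\times n\) conclusion---and it is this trick that your proposal is missing.
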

\begin{proof}
Let
\begin{equation}
\label{eq:embedding2}
\tilde A=\begin{bmatrix} \Re(A) & \Im(A) \\ -\Im(A) & \Re(A) \end{bmatrix},
\qquad
\tilde B=\begin{bmatrix} \Re(B) & -\Im(B) \\ -\Im(B) & -\Re(B) \end{bmatrix}.
\end{equation}
Then
\[
\frac{1}{\sqrt{2}}
\begin{bmatrix} I_n & -\mi I_n \\ I_n & \mi I_n \end{bmatrix}\herm
\cdot
\begin{bmatrix} A & B \\ \conj B & \conj A \end{bmatrix}
\cdot
\frac{1}{\sqrt{2}}
\begin{bmatrix} I_n & -\mi I_n \\ I_n & \mi I_n \end{bmatrix}
=\tilde A+\tilde B,
\]
indicating that~\eqref{eq:cond} is equivalent to \(\tilde A+\tilde B\GT0\).
Notice that
\[
\tilde A+\tilde B=J_n\trans(\tilde A-\tilde B)J_n.
\]
Therefore, \(\tilde A-\tilde B\) is also positive definite.
In the proof of Theorem~\ref{thm:equiv}, we have shown that \(H\) is unitarily
similar to \(-\mi J_n(\tilde A+\tilde B)\).
Since the eigenvalues of \(H\) are real and appear in pairs
\(\set{\lambda_j(H),-\lambda_j(H)}\), we obtain
\[
0<\lambda_j(H)=\lambda_j\bigl(-\mi J_n(\tilde A+\tilde B)\bigr)
=\sqrt{\lambda_{2j}\bigl([-\mi J_n(\tilde A+\tilde B)]^2\bigr)}
=\sqrt{\lambda_{2j}\bigl((\tilde A-\tilde B)(\tilde A+\tilde B)\bigr)}.
\]
Applying Lemma~\ref{lem:AM-GM} yields
\[
\sqrt{\lambda_{2j}\bigl((\tilde A-\tilde B)(\tilde A+\tilde B)\bigr)}
\leq\lambda_{2j}(\tilde A).
\]
Finally, since \(\Lambda(\tilde A)=\Lambda(A)\cup\Lambda(\conj A)
=\Lambda(A)\cup\Lambda(A)\), that is, the
eigenvalues of \(\tilde A\) are the same as those of \(A\) with doubled
multiplicity, we arrive at
\[
\lambda_{2j}(\tilde A)=\lambda_j(A).
\]
The theorem is thus proved.
\end{proof}

\section{Algorithms and implementations}
\label{sec:algorithms}
In this section, we present structure preserving algorithms for solving the
Bethe--Salpeter eigenvalue problem.
As we have shown in Theorem~\ref{thm:equiv}, Bethe--Salpeter eigenvalue
problems are equivalent to real Hamiltonian eigenvalue problems. Thus any
Hamiltonian eigensolver (see, for example, \cite{BKM2005}) can always be
used to solve this type of eigenvalue problem.
However, when~\eqref{eq:cond} holds, a more efficient algorithm can be
used to solve the Bethe--Salpeter eigenvalue problem.
Throughout this section the condition~\eqref{eq:cond} is assumed to be
satisfied.
In most cases, only the positive eigenvalues and the corresponding
eigenvectors are required for studying the properties of materials.
We will demonstrate how to compute them efficiently.

\subsection{Complex Bethe--Salpeter eigenvalue problems}
\label{subsec:complex}
From~\eqref{eq:GMG}, we can write
\[
Hx=\lambda x \qquad\Longleftrightarrow\qquad
\begin{bmatrix} I_n & 0 \\ 0 & -I_n \end{bmatrix}x
=\frac{1}{\lambda}\begin{bmatrix} A & B \\ \conj B & \conj A \end{bmatrix}x.
\]
A straightforward approach is to feed this problem to a generalized
Hermitian--definite eigensolver (for example, \texttt{ZHEGV} in
\lapack{}~\cite{LAPACK}).
However, this approach is in general \emph{not} structure preserving, meaning
that the computed eigenvalues and eigenvectors may not have the properties 
described in Theorem~\ref{thm:positive-definite}.
To see this, we analyze the algorithm implemented in \lapack{}'s
\texttt{ZHEGV}, which first computes the Cholesky factorization
\[
\begin{bmatrix} A & B \\ \conj B & \conj A \end{bmatrix}
=LL\herm
\]
and then applies a standard Hermitian eigensolver to the transformed problem
\[
L\begin{bmatrix} I_n & 0 \\ 0 & -I_n \end{bmatrix}L\herm.
\]
Let \(L+\Delta L\) be the computed Cholesky factor.
Then the backward error of \(H\) is
\[
\begin{bmatrix} \Delta_{11} & -\Delta_{21}\herm \\
\Delta_{21} & \Delta_{22} \end{bmatrix}
:=\begin{bmatrix} I_n & 0 \\ 0 & -I_n \end{bmatrix}
\biggl((L+\Delta L)(L+\Delta L)\herm
-\begin{bmatrix} A & B \\ \conj B & \conj A \end{bmatrix}\biggr),
\]
which does not necessarily satisfy \(\Delta_{21}\trans=\Delta_{21}\) and
\(\Delta_{22}=\conj\Delta_{11}\).
Therefore, the structure of \(H\) is destroyed in the sense that the error
introduced in the computed Cholesky factorization cannot be interpreted as a
structured backward error of \(H\).
Consequently, the properties given in Theorem~\ref{thm:positive-definite} are lost.
For instance, the eigenvalues of \((L+\Delta L)\diag\set{I_n,-I_n}(L+\Delta
L)\) do not necessarily appear in positive and negative pairs.

To develop a structure preserving approach, we make use of the observation we
made in Section~\ref{sec:properties}.
We have observed that \(Q\herm HQ=-\mi J_nM\), where
\[
Q=\frac{1}{\sqrt{2}}
\begin{bmatrix} I_n & -\mi I_n \\ I_n & \mi I_n \end{bmatrix},
\qquad
M=\begin{bmatrix} \Re(A+B) & \Im(A-B) \\ -\Im(A+B) & \Re(A-B) \end{bmatrix}
\GT0.
\]
Notice that both \(J_n\) and \(M\) are real matrices. Thus by working
with these matrices, we can avoid the use of complex arithmetic.
The basic steps of our algorithm are outlined in Algorithm~\ref{alg:complex},
in which we only compute the positive eigenvalues and the corresponding
eigenvectors.
Once the matrix \(M\) has been constructed, we perform a Cholesky
factorization, \(M=LL\trans\) to transform the non-Hermitian matrix, \(-\mi
J_nLL\trans\),
into the Hermitian matrix, \(-\mi L\trans J_nL\).
Let \(\Delta L\) be the error in the computed Cholesky factor \(L\).
Then
\[
\begin{bmatrix} \Delta_{11} & \Delta_{21}\trans \\
\Delta_{21} & \Delta_{22} \end{bmatrix}
:=(L+\Delta L)(L+\Delta L)\trans-M
\]
is the backward error of \(M\), and is real and symmetric.
Similar to the proof of Theorem~\ref{thm:equiv}, we set
\[
\Delta A=\frac{\Delta_{11}+\Delta_{22}}{2}
-\mi\frac{\Delta_{21}-\Delta_{21}\trans}{2},
\qquad \Delta B=\frac{\Delta_{11}-\Delta_{22}}{2}
-\mi\frac{\Delta_{21}+\Delta_{21}\trans}{2}.
\]
Then we have \((\Delta A)\herm=\Delta A\), \((\Delta B)\trans=\Delta B\), and
\[
\begin{bmatrix} \Delta_{11} & \Delta_{21}\trans \\
\Delta_{21} & \Delta_{22} \end{bmatrix}
=\begin{bmatrix} \Re(\Delta A+\Delta B) & \Im(\Delta A-\Delta B) \\
-\Im(\Delta A+\Delta B) & \Re(\Delta A-\Delta B) \end{bmatrix},
\]
indicating that the backward error in the computed Cholesky factorization of
\(M\) can be converted to a structured backward error of \(H\).
We remark that this analysis is valid even if \(\Delta L\) is \emph{not} lower
triangular.

In the next step we form \(L\trans J_nL\), which is a real skew-symmetric
matrix whose spectrum is symmetric with respect to the origin.
Applying a skew-symmetric eigensolver (for example, the one
in~\cite{WG1978b,WG1978}) to this matrix preserves the structure 
of \(\Lambda(H)\) and avoids the explicit use of complex arithmetic.
We will discuss the use of a skew-symmetric eigensolver in detail in
Section~\ref{subsec:parallel}.
Since any \(2n\times2n\) real skew-symmetric matrix is of the form \(C\trans
J_nC\), where \(C\in\mathbb{R}^{2n\times2n}\), the skew-symmetric
backward errors introduced in the construction of \(L\trans J_nL\) and in the
skew-symmetric eigensolver can be interpreted as a backward error (which does
not need to be lower triangular) of \(L\).
We have shown that the error in \(L\) can be converted to a structured
backward error of \(H\).
Thus this approach is structure preserving.
As a remark, we mention that there exists an alternative approach that
computes an SVD-like decomposition of \(L\) and avoids the explicit
construction of \(L\trans J_nL\), see~\cite{Xu2003,Xu2005} for details.

Once the eigenvalues and eigenvectors of \(L\trans J_nL\) have been computed,
the eigenvectors of \(H\) can be recovered by simply accumulating all
similarity transformations.
Complex arithmetic can also be avoided in this step by carefully manipulating
the real and imaginary parts of the eigenvectors.
As we have shown in Theorem~\ref{thm:positive-definite}, the left
eigenvectors, as well as the eigenvectors corresponding to negative
eigenvalues, can be restored with negligible effort according
to~\eqref{eq:eigvec_r-}--\eqref{eq:eigvec_l-} if needed.

\begin{algorithm}[!tb]
\caption{Algorithm for the complex Bethe--Salpeter eigenvalue problem}
\label{alg:complex}
\begin{algorithmic}[1]
\REQUIRE \(A=A\herm\), \(B=B\trans\in\mathbb{C}^{n\times n}\) such
that~\eqref{eq:cond} is satisfied.
\ENSURE \(X_1\), \(X_2\in\mathbb{C}^{n\times n}\) and
\(\Lambda_+=\diag\set{\lambda_1,\dotsc,\lambda_n}\) satisfying
\[
H\begin{bmatrix} X_1 \\ X_2 \end{bmatrix}
=\begin{bmatrix} X_1 \\ X_2 \end{bmatrix}\Lambda_+,
\qquad X_1\herm X_1-X_2\herm X_2=I_n,
\]
and \(\lambda_i>0\) for \(i=1\), \(\dotsc\), \(n\).
\STATE Construct
\[
M=\begin{bmatrix} \Re(A+B) & \Im(A-B) \\ -\Im(A+B) & \Re(A-B) \end{bmatrix}.
\]
\STATE Compute the Cholesky factorization \(M=LL\trans\).
\STATE Construct \(W=L\trans J_nL\), where \(J_n\) is defined in~\eqref{eq:J}.
\STATE Compute the spectral decomposition
\(-\mi W=[Z_+,Z_-]\diag\set{\Lambda_+,-\Lambda_+}[Z_+,Z_-]\herm\).
\label{algstep:skew-sym}
\STATE Set
\[
\begin{bmatrix} X_1 \\ X_2 \end{bmatrix}
=\begin{bmatrix} I_n & 0 \\ 0 & -I_n \end{bmatrix}QLZ_+\Lambda_+^{-1/2}.
\]
\label{algstep:eigenvector}
\end{algorithmic}
\end{algorithm}

\subsection{Real Bethe--Salpeter eigenvalue problems}
\label{subsec:real}
As we have seen in Section~\ref{sec:properties}, the real Bethe--Salpeter
eigenvalue problem can be reduced to a product eigenvalue problem.
Directly feeding \(A+B\) and \(A-B\) to a symmetric--definite eigensolver
squares the eigenvalues and can potentially spoil the accuracy of the
eigenvalues and eigenvectors.
When the accuracy requirement is high, an alternative approach is needed.
Suppose that \(A+B=L_1L_1\trans\), \(A-B=L_2L_2\trans\).
Then \((A+B)(A-B)\) is similar to \((L_2\trans L_1)(L_2\trans L_1)\trans\).
Notice that the spectral decomposition of
\((L_2\trans L_1)(L_2\trans L_1)\trans\) can be obtained from the singular
value decomposition of \(L_2\trans L_1\).
Theorem~\ref{thm:real-BSE} summarizes this observation.

\begin{theorem}
\label{thm:real-BSE}
Let \(H\) be defined by~\eqref{eq:H-real} satisfying that \(A+B\GT0\),
\(A-B\GT0\).
Suppose that \(L_2\trans L_1=U\Lambda_+ V\trans\) is the singular value
decomposition of \(L_2\trans L_1\), where \(L_1\),
\(L_2\in\mathbb{R}^{n\times n}\) satisfy that \(L_1L_1\trans=A+B\)
and \(L_2L_2\trans=A-B\).
Then the spectral decomposition of \(H\) is given by
\[
H=X\begin{bmatrix} \Lambda_+ & 0 \\ 0 & -\Lambda_+ \end{bmatrix}Y\trans,
\]
where
\[
X=\frac{1}{2}\begin{bmatrix}
L_2U+L_1V & L_2U-L_1V \\
L_2U-L_1V & L_2U+L_1V
\end{bmatrix}
\begin{bmatrix} \Lambda_+^{-1/2} & 0 \\ 0 & \Lambda_+^{-1/2} \end{bmatrix}
\]
and
\[
Y=X\itrans=\frac{1}{2}\begin{bmatrix}
L_1V+L_2U & L_1V-L_2U \\
L_1V-L_2U & L_1V+L_2U
\end{bmatrix}
\begin{bmatrix} \Lambda_+^{-1/2} & 0 \\ 0 & \Lambda_+^{-1/2} \end{bmatrix}.
\]
\end{theorem}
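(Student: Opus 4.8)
The plan is to verify the stated decomposition by direct substitution, using the
singular value decomposition of \(L_2\trans L_1\) as the only nontrivial input.
The motivation is the block-cyclic reduction noted just before the theorem: the
orthogonal symplectic transformation by
\(G=\frac{1}{\sqrt2}\bigl[\begin{smallmatrix} I_n & I_n\\ -I_n & I_n\end{smallmatrix}\bigr]\)
turns \(H\) into \(\bigl[\begin{smallmatrix} 0 & A+B\\ A-B & 0\end{smallmatrix}\bigr]\),
whose square has diagonal blocks \((A+B)(A-B)\) and \((A-B)(A+B)\); since
\(A+B=L_1L_1\trans\) and \(A-B=L_2L_2\trans\) with \(L_1\), \(L_2\) nonsingular,
these products are similar to \((L_2\trans L_1)(L_2\trans L_1)\trans\) and to
\((L_2\trans L_1)\trans(L_2\trans L_1)\), which the SVD
\(L_2\trans L_1=U\Lambda_+V\trans\) diagonalizes. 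Rather than propagate this chain
of similarities and reassemble the eigenvectors, I would simply record the two
identities it produces and check the final formulas against \(H\).

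\noindent\emph{Step 1 (key identities).}
From \(A+B=L_1L_1\trans\), \(A-B=L_2L_2\trans\) and
\(L_2\trans L_1=U\Lambda_+V\trans\) (hence \(L_1\trans L_2=V\Lambda_+U\trans\)),
one immediately gets
\[
(A+B)L_2U=L_1\bigl(L_1\trans L_2\bigr)U=L_1V\Lambda_+,\qquad
(A-B)L_1V=L_2\bigl(L_2\trans L_1\bigr)V=L_2U\Lambda_+ .
\]
Since \(L_1\) and \(L_2\) are nonsingular, \(L_2\trans L_1\) is nonsingular, so all
singular values are positive and \(\Lambda_+\GT0\).

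\noindent\emph{Step 2 (right eigenvectors).}
Write \(c=L_2U+L_1V\), \(d=L_2U-L_1V\), so that
\(X=\frac12\bigl[\begin{smallmatrix} c & d\\ d & c\end{smallmatrix}\bigr]
\diag\set{\Lambda_+^{-1/2},\Lambda_+^{-1/2}}\).
Computing \(H\bigl[\begin{smallmatrix} c\\ d\end{smallmatrix}\bigr]\) and
\(H\bigl[\begin{smallmatrix} d\\ c\end{smallmatrix}\bigr]\) blockwise, grouping terms
into \((A\pm B)L_2U\) and \((A\pm B)L_1V\), and applying Step 1 gives
\(H\bigl[\begin{smallmatrix} c\\ d\end{smallmatrix}\bigr]
=\bigl[\begin{smallmatrix} c\\ d\end{smallmatrix}\bigr]\Lambda_+\) and
\(H\bigl[\begin{smallmatrix} d\\ c\end{smallmatrix}\bigr]
=-\bigl[\begin{smallmatrix} d\\ c\end{smallmatrix}\bigr]\Lambda_+\), that is,
\(HX=X\diag\set{\Lambda_+,-\Lambda_+}\).

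\noindent\emph{Step 3 (biorthogonality and conclusion).}
With \(Y\trans=\frac12\diag\set{\Lambda_+^{-1/2},\Lambda_+^{-1/2}}
\bigl[\begin{smallmatrix} c\trans & -d\trans\\ -d\trans & c\trans\end{smallmatrix}\bigr]\),
the product \(Y\trans X\) reduces to evaluating \(c\trans c-d\trans d\) and
\(c\trans d-d\trans c\). Expanding and using \(U\trans U=V\trans V=I_n\) together
with \(L_2\trans L_1=U\Lambda_+V\trans\) gives \(c\trans c-d\trans d=4\Lambda_+\),
while \(c\trans d=U\trans L_2\trans L_2U-V\trans L_1\trans L_1V\) is symmetric so
\(c\trans d-d\trans c=0\); hence a short computation yields \(Y\trans X=I_{2n}\).
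Therefore \(X\) is nonsingular, \(Y=X\itrans\), and transposing
\(HX=X\diag\set{\Lambda_+,-\Lambda_+}\) with \(Y\trans=X\inv\) gives
\(Y\trans H=\diag\set{\Lambda_+,-\Lambda_+}Y\trans\), so
\(H=X\diag\set{\Lambda_+,-\Lambda_+}Y\trans\), as claimed.

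\noindent\emph{Main obstacle.}
The only step requiring care is the biorthogonality in Step 3: the terms
\(U\trans L_2\trans L_2U\) and \(V\trans L_1\trans L_1V\) do not simplify
individually, and one must observe that they enter only symmetrically — so they
cancel out of \(c\trans c-d\trans d\) and make \(c\trans d-d\trans c\) vanish.
Everything else is routine block arithmetic driven by the two identities of Step 1.
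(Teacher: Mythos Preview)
Your proposal is correct and follows exactly the approach the paper takes: the paper's own proof simply states that it suffices to verify \(HX=X\diag\set{\Lambda_+,-\Lambda_+}\) and \(Y\trans X=I_{2n}\) by direct algebraic manipulation, which is precisely what you carry out in Steps~2 and~3 (your Step~1 merely isolates the two SVD consequences that drive those verifications). The details you supply---in particular the observation that \(c\trans d=U\trans L_2\trans L_2U-V\trans L_1\trans L_1V\) is symmetric---are accurate and make explicit what the paper leaves as ``simple algebraic manipulations.''
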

\begin{proof}
It suffices to verify that \(HX=X\diag\set{\Lambda_+,-\Lambda_+}\) and
\(Y\trans X=I_{2n}\), which only require simple algebraic manipulations.
\end{proof}

Based on this theorem, we propose Algorithm~\ref{alg:real} as a real
Bethe--Salpeter eigensolver.
We remark that such an eigensolver can also be used as a dense kernel within
structure preserving projection methods for linear response eigenvalue
problems~\cite{BL2013,KMS2014}.
Just like the complex case, the negative eigenvalues and the corresponding
eigenvectors, if needed, can be easily constructed from the outputs of
Algorithm~\ref{alg:real} according to Theorems~\ref{thm:positive-definite}
and~\ref{thm:real-BSE}.

\begin{algorithm}[!tb]
\caption{Algorithm for the real Bethe--Salpeter eigenvalue problem}
\label{alg:real}
\begin{algorithmic}[1]
\REQUIRE \(A=A\trans\), \(B=B\trans\in\mathbb{R}^{n\times n}\) such that
\(A+B\GT0\) and \(A-B\GT0\).
\ENSURE \(X_1\), \(X_2\in\mathbb{R}^{n\times n}\) and
\(\Lambda_+=\diag\set{\lambda_1,\dotsc,\lambda_n}\) satisfying
\[
H\begin{bmatrix} X_1 \\ X_2 \end{bmatrix}
=\begin{bmatrix} X_1 \\ X_2 \end{bmatrix}\Lambda_+,
\qquad X_1\trans X_1-X_2\trans X_2=I_n,
\]
and \(\lambda_i>0\) for \(i=1\), \(\dotsc\), \(n\).
\STATE Compute the Cholesky factorizations \(A+B=L_1L_1\trans\),
\(A-B=L_2L_2\trans\).
\STATE Compute the singular value decomposition
\(L_2\trans L_1=U\Lambda_+V\trans\).
\STATE Set \(X_1=L_2U+L_1V\), \(X_2=L_2U-L_1V\).
\end{algorithmic}
\end{algorithm}

\subsection{Parallel implementations}
\label{subsec:parallel}
To solve large scale Bethe--Salpeter eigenvalue problems arising from quantum
physics, parallelization of Algorithms~\ref{alg:complex} and~\ref{alg:real} on
distributed memory systems is required in practical computations.
All \(O(n^3)\) operations in Algorithm~\ref{alg:real} consist of basic
linear algebra operations, and can be accomplished by calling linear algebra
libraries such as \blas{}/\lapack{}.
There also exist \scalapack{} subroutines for these operations, which allow us
to parallelize the algorithm in a straightforward manner.
So we will mainly discuss implementation issues for
Algorithm~\ref{alg:complex}.

The main obstacle to efficiently implementing Algorithm~\ref{alg:complex} is
the lack of a skew-symmetric eigensolver in \lapack{}/\scalapack{}.
The algorithm described in~\cite{WG1978b,WG1978} is based on level 1 BLAS
operations, hence is not efficient on modern architectures with memory
hierarchy.
Therefore, we have to implement step~\ref{algstep:skew-sym} in
Algorithm~\ref{alg:complex} by ourselves.
To make use of \scalapack{} as much as possible, we propose the following
strategy shown in Algorithm~\ref{alg:detail}.
Several remarks on the implementation are in order:
\begin{enumerate}
\item Tridiagonal reduction of a skew-symmetric matrix can be accomplished by
applying a sequence of Householder reflections.
This is in fact slightly simpler compared to reducing a symmetric matrix to
tridiagonal form.
We implemented a modified version of \scalapack{}'s \texttt{PDSYTRD} to achieve
this goal.
Several \blas{}/\pblas{}-like subroutines for skew-symmetric matrix
operations are also implemented.
\item Suppose that
\[
T=\tridiag
\begin{Bmatrix}
& \alpha_1 & & \cdots & & \alpha_{2n-1} & \\
0 & & \cdots & & \cdots & & 0 \\
& -\alpha_1 & & \cdots & & -\alpha_{2n-1} & \\
\end{Bmatrix}.
\]
Then
\[
-\mi D\herm TD=\tridiag
\begin{Bmatrix}
& \alpha_1 & & \cdots & & \alpha_{2n-1} & \\
0 & & \cdots & & \cdots & & 0 \\
& \alpha_1 & & \cdots & & \alpha_{2n-1} & \\
\end{Bmatrix}
\]
is a real symmetric tridiagonal matrix, whose spectral decomposition can be
easily computed by calling \scalapack{}.
This technique is essentially the same as the one in~\cite{WG1978}.
We then use the bisection method (\texttt{PDSTEBZ}/\texttt{PDSTEIN}) to
compute the positive eigenvalues and the corresponding eigenvectors.
The eigenvectors are reorthogonalized when the accuracy requirement is high.
\item Step~\ref{algstep:complex-matmult} in Algorithm~\ref{alg:detail} is
\emph{not} performed by \pblas{}.
Because \(Q\) and \(D\) are both known, the application of these unitary
transformations can be accomplished with \(O(n^2)\) operations.
\item In Step~\ref{algstep:complex-matmult2} of Algorithm~\ref{alg:detail}, we
separate the computation for real and imaginary parts:
\(\Re(X_+)=\Re(\hat X)V_+\), \(\Im(X_+)=\Im(\hat X)V_+\).
This is based on the fact that \(V_+\) is real, and one \texttt{PZGEMM} call
is about twice as expensive as two \texttt{PDGEMM} calls.
\end{enumerate}

\begin{algorithm}[!tb]
\caption{Parallel implementation of steps~\ref{algstep:skew-sym}
and~\ref{algstep:eigenvector} in Algorithm~\ref{alg:complex}}
\label{alg:detail}
\begin{algorithmic}[1]
\STATE Tridiagonal reduction: \(W=UTU\trans\).
\STATE Compute the positive spectral decomposition
\(-\mi D\herm TD=[V_+,V_-]\diag\set{\Lambda_+,-\Lambda_+}[V_+,V_-]\trans\),
where \(D=\diag\set{1,\mi,\mi^2,\dotsc,\mi^{2n}}\).
\STATE Construct \(\Phi=LU\) by applying a sequence of Householder reflections.
\STATE Construct \(\hat Y=Q\Phi D\).
\label{algstep:complex-matmult}
\STATE Construct \(Y_+=\hat YV_+\) using \pblas{}.
\label{algstep:complex-matmult2}
\STATE Set
\[
\begin{bmatrix} X_1 \\ X_2 \end{bmatrix}
=\begin{bmatrix} I_n & 0 \\ 0 & -I_n \end{bmatrix}Y_+.
\]
\end{algorithmic}
\end{algorithm}

Finally, we remark that our parallel algorithms/implementations are just
proof-of-concept.
There is certainly room for improvement.
For example, our tridiagonal reduction step is modified from \scalapack{}'s
\texttt{PDSYTRD}, which is relatively simple, but is not state-of-the-art.
Many modern implementations of symmetric tridiagonal reduction are based on
successive band reduction~\cite{BLS2000,LLD2011,ELPA}.
The successive band reduction technique extends naturally to skew-symmetric
matrices.
There are also many alternative tridiagonal eigensolvers other than the
bisection method, for example, the MRRR method~\cite{DPV2006,WL2013}.
But improvements in these directions exceed the scope of this paper.

\section{Numerical experiments}
\label{sec:experiments}
In this section, we present numerical examples for three matrices obtained
from discretized Bethe--Salpeter equations.
The numerical experiments are performed on the Cray XE6 machine, Hopper, at
the National Energy Research Scientific Computing Center (NERSC).%
\footnote{\url{https://www.nersc.gov/users/computational-systems/hopper/}}
Each Hopper node consists of two twelve-core AMD `MagnyCours' 2.1-GHz processors,
and has 32 GB DDR3 1333-MHz memory. Each core has its own L1 and L2 caches, with 64 KB and 512 KB, respectively. Hopper's compute nodes are connected via 
a 3D torus Cray Gemini network with a maximum bandwidth of 8.3 gigabytes per 
second. The internode latency ranges from 1.27 microseconds to 1.38 microseconds. The latency between cores is less than 1 microsecond.

The examples we use here correspond to discretized Bethe--Salpeter Hamiltonians
associated with naphthalene, gallium arsenide (GaAs), and boron nitride (BN),
respectively.
The dimensions of these \(2n\times2n\) Hamiltonians are \(64\times64\),
\(256\times256\), and \(4608\times4608\), respectively.
We implemented Algorithm~\ref{alg:complex} in Fortran 90, using the message
passing interface (MPI) for parallelization.
We used \scalapack{} to perform some basic parallel matrix computations.
No multithreading features such as OpenMP or threaded linear algebra libraries
are used.
To make fair comparisons, all eigenvalues and eigenvectors are calculated.

We first compare our implementation of Algorithm~\ref{alg:complex} with
\lapack{}'s non-Hermitian eigensolver \texttt{ZGEEV}.
Test runs are performed using a single core so that both solvers are
sequential.
From Table~\ref{tab:residual} we see that both solvers produce solutions
with small residuals, and Algorithm~\ref{alg:complex} is in general more
accurate.
We also compute the eigenvalues of \(A\) using \lapack{}'s Hermitian
eigensolver \texttt{ZHEEV}.
In Figure~\ref{fig:DOS}, we plot the spectral density of the computed
eigenvalues with and without TDA.
The delta function in~\eqref{eq:DOS} is approximated using the Gaussian
function, that is,
\[
\delta(t)\approx\frac{1}{\sqrt{2\pi}\,\sigma}
\exp\Bigl(-\frac{t^2}{2\sigma^2}\Bigr),
\]
with standard deviation \(\sigma=5\times10^{-4}\).
The figure illustrates the difference between \(\Lambda(A)\) and
\(\Lambda(H)\) for the naphthalene example.
We observe up to \(12\%\) relative differences in eigenvalues in this example
when TDA is used.
We also see from the figure that \(\Lambda(A)\) is always to the right of
\(\Lambda(H)\).
For the other two examples, the error introduced by TDA are up to \(1.2\%\)
and \(0.93\%\), respectively.
Computational results confirm that all eigenvalues obtained by TDA are larger
than the true eigenvalues as predicted by Theorem~\ref{thm:TDA}.
We remark that \texttt{ZGEEV} produces quite accurate eigenvalues in these
examples, despite the fact that all computed eigenvalues are not real.

\begin{table}[!tb]
\centering
\caption{Comparison between \texttt{ZGEEV} and Algorithm~\ref{alg:complex}.}
\label{tab:residual}
\begin{tabular}{lcccc}
\hline
& & naphthalene & ~~~~GaAs~~~~ & ~~~~~BN~~~~~ \\
& & $n=32$ & $n=128$ & $n=2304$ \\
\hline
\texttt{ZGEEV}
& $\|Y\herm HX-\Lambda\|_F/\|H\|_F$
& $3.8\times10^{-15}$ & $8.5\times10^{-15}$ & $2.0\times10^{-14}$ \\
& $\|Y\herm X-I_{2n}\|_F/\sqrt{2n}$
& $2.8\times10^{-15}$ & $9.0\times10^{-15}$ & $2.1\times10^{-14}$ \\
\hline
Algorithm~\ref{alg:complex}
& $\|Y\herm HX-\Lambda\|_F/\|H\|_F$
& $1.5\times10^{-15}$ & $3.3\times10^{-15}$ & $5.4\times10^{-15}$ \\
& $\|Y\herm X-I_{2n}\|_F/\sqrt{2n}$
& $1.1\times10^{-15}$ & $3.1\times10^{-15}$ & $4.3\times10^{-15}$ \\
\hline
\end{tabular}
\end{table}

\begin{figure}[!tb]
\centering
\includegraphics[width=0.6\textwidth]{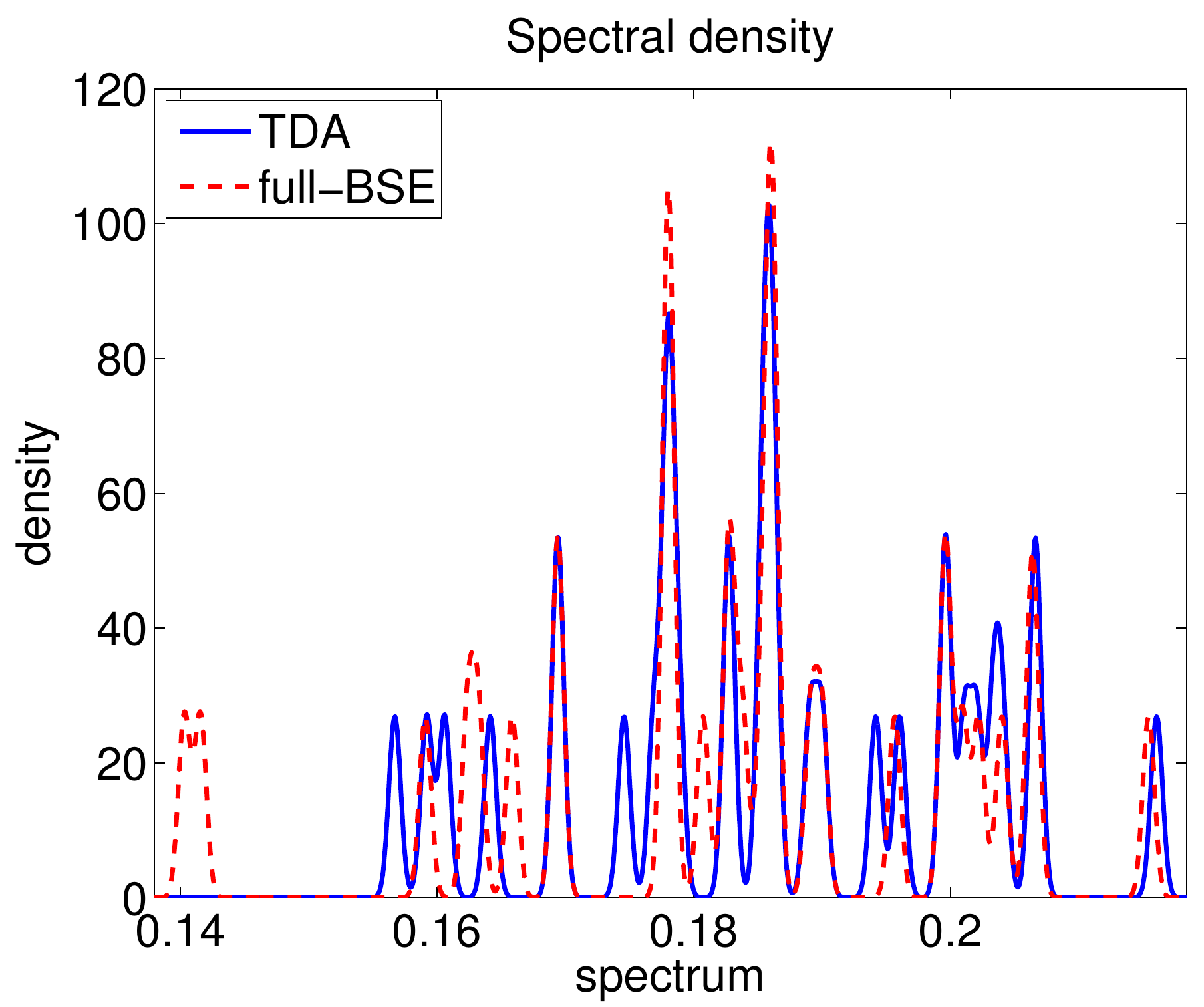}
\caption{Comparison of the spectral density, defined by
Equation~\eqref{eq:DOS}, for the full Bethe--Salpeter eigenvalue problem and
the BSE constructed without the off-diagonal blocks, with the Tamm--Dancoff
approximation.}
\label{fig:DOS}
\end{figure}

Table~\ref{tab:time} contains the execution time of different approaches.
Algorithm~\ref{alg:complex} is about five times faster than the non-Hermitian
solver \texttt{ZGEEV}.
An interesting observation is that TDA is \emph{not} much faster than our
full-BSE solver, especially when \(n\) gets large.
This is not a surprising result.
In fact, the major cost of Algorithm~\ref{alg:complex} is diagonalizing a real
\(2n\times2n\) skew-symmetric matrix, which is comparable to the cost of
diagonalizing a complex \(n\times n\) Hermitian matrix.

\begin{table}[!tb]
\centering
\caption{Execution time of different approaches.}
\label{tab:time}
\begin{tabular}{lccc}
\hline
& naphthalene & ~~~~GaAs~~~~ & ~~~~~BN~~~~~ \\
& $n=32$ & $n=128$ & $n=2304$ \\
\hline
\texttt{ZGEEV}              & $1.8\times10^{-2}$ & $4.8\times10^{-1}$ & $1.2\times10^{3}$ \\
\hline
Algorithm~\ref{alg:complex} & $4.1\times10^{-3}$ & $7.6\times10^{-2}$ & $1.6\times10^{2}$ \\
\hline
\texttt{ZHEEV} (TDA)        & $1.6\times10^{-3}$ & $5.2\times10^{-2}$ & $2.5\times10^{2}$ \\
\hline
\end{tabular}
\end{table}

Finally, we perform a simple scalability test on our parallel implementation
of Algorithm~\ref{alg:complex}.
We use the BN example since it is of moderate size.
Test runs are performed on \(1\times1\), \(2\times2\), \(\dotsc\),
\(9\times9\), \(10\times10\) processor grids, with block factor \(n_b=64\) for
the 2D block cyclic data layout.
In Figure~\ref{fig:scale}, we illustrate the overall execution time, as well
as the performance profile for each component of the algorithm.
We can see from the figure that all components scale similarly in this
example.
This indicates that Algorithm~\ref{alg:complex} scales reasonably well as the
overall parallel scalability is close to that of the \texttt{PDGEMM}
component.

\begin{figure}[!tb]
\centering
\includegraphics[width=0.7\textwidth]{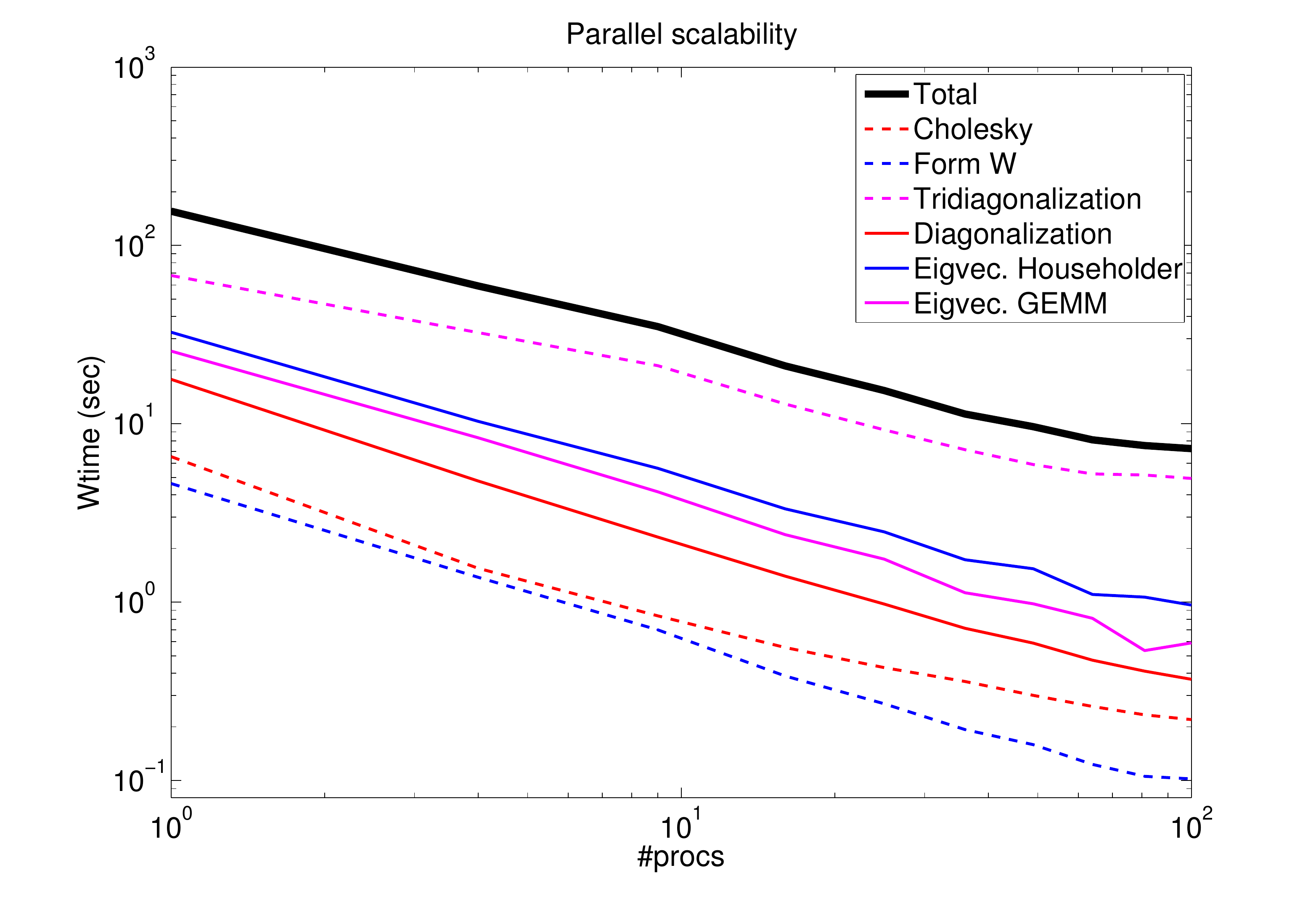}
\caption{Parallel scalability of Algorithm~\ref{alg:complex} for BN
(\(H\in\mathbb{C}^{4608\times4608}\)).}
\label{fig:scale}
\end{figure}

\section{Conclusions}
\label{sec:conclusions}
We showed, in this paper, that the Bethe--Salpeter eigenvalue problem is
equivalent to a real Hamiltonian eigenvalue problem and can always be solved
by an efficient Hamiltonian eigensolver.
For complex Bethe--Salpeter Hamiltonians that satisfy the additional
property~\eqref{eq:cond}, which almost always holds in practice,  it is
possible to compute all of its eigenpairs by a structure preserving algorithm
we developed in this paper.
When the Hamiltonian is real, we can turn the eigenvalue problem into a
product eigenvalue problem. We presented an efficient and reliable way to
solve this eigenvalue problem.

When the Tamm--Dancoff approximation is used, the Bethe--Salpeter eigenvalue
problem reduces to a Hermitian eigenvalue problem that can be solved by
existing tools in the \scalapack{} library.
We showed that Tamm--Dancoff approximation always overestimate all
eigenvalues.

We presented numerical algorithms that demonstrated the accuracy and
efficiency of our algorithm.
However, we should point out that our parallel implementation is preliminary,
and there is plenty of room for improvements.

\section*{Acknowledgments}
The authors thank Zhaojun Bai, Peter Benner, Fabien Bruneval, Heike
Fa{\ss}bender, Daniel Kressner, and Hongguo Xu for fruitful discussions.
The authors are also grateful to anonymous referees for careful reading and
providing valuable comments.
This material is based upon work supported by the Scientific Discovery through
Advanced Computing (SciDAC) Program on Excited State Phenomena in Energy
Materials funded by the U.S. Department of Energy, Office of Science, under
SciDAC program, the Offices of Advanced Scientific Computing Research and
Basic Energy Sciences contract number DE-AC02-05CH11231 at the Lawrence
Berkeley National Laboratory.
This research used resources of the National Energy Research Scientific
Computing Center, a DOE Office of Science User Facility supported by the
Office of Science of the U.S. Department of Energy under Contract No.\
DE-AC02-05CH11231.

\end{document}